\documentclass[12pt]{article}
\title{On definable J-sets}
\author{Zhentao Zhang}

\date{}

\usepackage{amsmath, amssymb, amsthm}    	
\usepackage{fullpage} 	
\usepackage{amscd}
\usepackage{hyperref}
\usepackage[all]{xy}
\usepackage{centernot}
\usepackage{color,xcolor} 
\usepackage{ulem}

\DeclareMathOperator*{\forkindep}{\raise0.2ex\hbox{\ooalign{\hidewidth$\vert$\hidewidth\cr\raise-0.9ex\hbox{$\smile$}}}}

\newcommand{\Gen}{\operatorname{Gen}}
\newcommand{\WGen}{\operatorname{WGen}}
\newcommand{\AP}{\operatorname{AP}}

\newcommand{\A}{\mathbb{A}}

\newcommand{\N}{\mathbb{N}}

\newcommand{\Z}{\mathbb{Z}}
\newcommand{\Q}{\mathbb{Q}}

\newcommand{\pCF}{p\mathrm{CF}}

\newcommand{\supp}{\mathrm{supp}}

\newcommand{\jj}{\mathfrak{j}}
\newcommand{\Pf}{\mathcal{P}_\mathrm{fin}}
\newcommand{\JJ}{\mathrm{J}}
\newcommand{\ext}{\mathrm{ext}}
\newcommand{\V}{\mathbb{V}}

\newtheorem{theorem}{Theorem}[section] 
\newtheorem{lemma}[theorem]{Lemma}

\newtheorem{coro}[theorem]{Corollary}
\newtheorem{fact}[theorem]{Fact}

\newtheorem{question}[theorem]{Question}
\newtheorem{prop}[theorem]{Proposition}
\newtheorem{proposition-eh}[theorem]{Proposition(?)}
\newtheorem*{theorem-star}{Main Theorem}
\newtheorem*{theorem-star-A}{Theorem A}
\newtheorem*{theorem-star-B}{Theorem B}
\newtheorem*{conjecture-star}{Conjecture}
\newtheorem*{lemma-star}{Lemma}
\newtheorem*{claim-star}{Claim}

\theoremstyle{definition}
\newtheorem{definition}[theorem]{Definition}

\newtheorem{remark}[theorem]{Remark}

\newcommand{\tp}{\mathrm{tp}}
\newcommand{\M}{\mathbb{M}}
\newcommand{\cO}{\mathcal{O}}
\newcommand{\Th}{\mathrm{Th}}

\newcommand{\Ga}{\mathbb{G}_\mathrm{a}}

\newenvironment{claimproof}[1][\proofname]
               {
                 \proof[#1]
                 
               }
               {
                 \endproof
               }

\begin{document}
\maketitle
\begin{abstract}
We study definable J-sets for definable groups and compare them with weakly generic sets. We show that the property that J-sets coincide with weakly generic sets is invariant on enough saturated models, and hence a model-theoretical property. We have positive results
for superstable commutative groups and some easy examples in $\pCF$. We also give an example for noncoincidence. 
\end{abstract}

\section{Introduction}

In \cite{Hindman}, J-sets are studied in combinatorics for semigroups and are regarded as ``large'' sets. We can study them among definable subsets of a definable group $G$. The method for Stone-Cech compactification can be replaced by the type space of externally definable subsets of $G$ in \cite{Newelski}. It is natural to compare J-sets with weakly generic sets given in \cite{Newelski} which are also about largeness. 

In Section \ref{section-basic}, we give some basic properties on J-sets. Let $X$ be a definable subset of $G$. We can study whether $X$ is a J-set of $G$ on different models. Like Lemma 2.1 \cite{Newelski} for weakly generic sets, the question has the same answer on $\aleph_0$-saturated models. Hence, whether J-sets coincide with weakly generic sets is invariant for $\aleph_0$-saturated models. Thus, the property that J-sets coincide with weakly generic sets on enough saturated models (for convenience, called the \textbf{J-property}) is for the theory of $G$. 

In Section \ref{section-positive}, we show that J-property is related to the classification theory: superstable commutative groups have J-property. 
We also show the additive groups of the field and the valued ring in $\pCF$, the theory of $\Q_p$ in the language of rings, have J-property.  

In Section \ref{section-example}, following \cite{example-neq}, we give an example of a commutative group without J-property which is an expansion of $(\Z,+)$ by a unary predicate. But the theory is not stable.

For notation, we always let $M$ be a structure and $G$ a group definable over $M$. We usually work in a monster model $\M$ of $T=\Th(M)$, the theory of $M$. 
We say that a set $X$ is defined in $M$ if $X=X(M)$ is definable in the structure $M$. If we do not say in which a definable set $X$ is defined, it is considered in $\M$ and $X=X(\M)$. We always write $G=G(\M)$.  We say that $G$ is stable/superstable if its induced theory from $T$ is, and for convenience, we assume $T$ is stable/superstable in this case.
Sometimes, we assume that $0\in\N$, while sometimes, to start from $1$, we assume that $0\notin \N$.

\section*{Acknowledgements}
Thanks to Teng Zhang for beneficial discussions and informing me of J-sets. 

\section{Basic properties}\label{section-basic}

We recall the definition of J-sets from Definition 14.14.1 \cite{Hindman}:

\begin{definition}
A subset $X$ of $G(M)$ is called a \textbf{J-set of $G$ in $M$}, if for every $F\in \Pf(G(M)^\N):=\{F\subset G(M)^\N: |F|<\aleph_0\}$, there are $m\in\N$, $t\in \jj_m:=\{s\in \N^m: s(i)<s(j+1) \text{ for all }j\}$ and $a\in G(M)^{m+1}$ such that for every $f\in F$,
$$\chi(m,a,t,f):=a(1)f(t(1))a(2)f(t(2))\cdots a(m-1)f(t(m-1))a(m+1)\in X$$ 

We call a type on $G$ over $M$ is a \textbf{J-type}, if every $X\in p$ is a J-set in $M$.
\end{definition}

\begin{remark}
It is easy to see that if $X$ is a J-set of $G$ in $M$ and $g\in G(M)$, then $g  X=\{gx:x\in X\}$ and $X g:=\{xg:x\in X\}$ are also J-sets of $G$ in $M$.
\end{remark}

Note that (see \cite{Hindman} Lemma 14.14.6)
if $X\cup Y$ is a J-set of $G$ in $M$, then at least one of $X$ and $Y$ is a J-set of $G$ in $M$. Hence, every F.I.P (finite intersection property) family of definable J-set of $G$ in $M$ can be extended to a J-type over $M$ and J-types always exist.

Let $S^\ext_G(M)$ be the type space of externally definable subsets of $G(M)$ given in \cite{Newelski} Section 4. It is known that $(S^\ext_G(M),*)$ is a compact right topological semigroup where for $p,q\in S^\ext_G(M)$ and externally definable $X\subset G(M)$,
$$X\in p*q  \text{ } \text{ } \text{ iff }  \text{ }  \text{ } \{g\in G(M): g^{-1}X \in q\}\in p$$
Moreover, $G(M)$ naturally (left) acts on $S^\ext_G(M)$ and $(S^\ext_G(M),*)$ is the envelope semigroup of the action. Then every $G(M)$-subflow (i.e. a closed space which is closed under the $G(M)$-action) is a left ideal. Let $\AP_G(M)$ be the minimal two-side ideal of $S^\ext_G(M)$ whose elements are called \textbf{almost periodic} in \cite{Newelski}. It is the union of all minimal $G(M)$-subflows. Let $\WGen_G(M)$ be the closure of $\AP_G(M)$ whose elements are called \textbf{weakly generic} in \cite{Newelski}. Here, the definition of weak genericity is from Corollary 1.8 \cite{Newelski}. 

Originally, from Definition 1.1 \cite{Newelski}, a subset of $X$ of $G(M)$ is \textbf{generic} if $G(M)$ can be covered by finitely many (left) translates of $X$, and a subset of $X$ of $G(M)$ is \textbf{weakly generic} if $X\cup Y$ is generic for some non-generic (definable, see Remark 1.2 \cite{Newelski}) $Y$. A type on $G$ over $M$ is (weakly) generic if every its element is (weakly) generic.  Let $\Gen_G(M)$ be the subspace of generic types in $S^\ext_G(M)$. 
Note that generic types always exist and play an important role in stable groups. But in general, generic types may not exist, while weakly generic types always exist.
Moreover, when $\Gen_G(M)\neq\emptyset$, by Corollary 1.9 \cite{Newelski}, $\WGen_G(M)$ is the only minimal $G(M)$-subflow and then $\AP_G(M)=\WGen_G(M)=\Gen_G(M)$.

We denote the subspace of J-types in $S^\ext_G(M)$ by $\JJ_G(M)$. Then like Theorem 14.14.4 \cite{Hindman}, we have

\begin{prop}
$\JJ_G(M)$ is a compact two-sided ideal of $S^\ext_G(M)$.
\end{prop}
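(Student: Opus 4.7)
The plan is to check four things: nonemptiness, closedness, and closure of $\JJ_G(M)$ under left and right multiplication by an arbitrary element of $S^\ext_G(M)$. Nonemptiness is immediate from the partition-regularity remark just before the proposition, since $G(M)$ itself is a J-set and every FIP family of definable J-sets extends to a J-type. For closedness, a type $p\in S^\ext_G(M)$ fails to be a J-type exactly when it contains some externally definable $X$ that is not a J-set, so
$$S^\ext_G(M)\setminus \JJ_G(M)=\bigcup_{X\text{ ext.\ def., not a J-set}}[X]$$
is a union of basic clopens $[X]:=\{p:X\in p\}$, making $\JJ_G(M)$ closed and hence compact.

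For the left ideal direction, fix $p\in \JJ_G(M)$, $q\in S^\ext_G(M)$, and $X\in q*p$. The set $\{g\in G(M): g^{-1}X\in p\}$ lies in $q$, so it is nonempty; picking any witness $g$, the set $g^{-1}X\in p$ is a J-set, and by the translation remark $X=g(g^{-1}X)$ is a J-set as well.

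The main step is the right ideal direction. Fix $p\in \JJ_G(M)$, $q\in S^\ext_G(M)$, and $X\in p*q$. Let $B:=\{g\in G(M): g^{-1}X\in q\}\in p$; since $p$ is a J-type, $B$ is a J-set. Given $F\in \Pf(G(M)^\N)$, apply the J-set property of $B$ to obtain $m\in\N$, $t\in \jj_m$, and $a\in G(M)^{m+1}$ with $\chi(m,a,t,f)\in B$ for every $f\in F$. Then $\chi(m,a,t,f)^{-1}X\in q$ for every $f\in F$, and the finite intersection of these translates also lies in $q$ and is in particular nonempty; pick $h$ in it. Now define $a'\in G(M)^{m+1}$ by $a'(i):=a(i)$ for $i\le m$ and $a'(m+1):=a(m+1)h$. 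The rightmost letter of $\chi$ absorbs $h$, giving $\chi(m,a',t,f)=\chi(m,a,t,f)\cdot h\in X$ for every $f\in F$, which shows that $X$ is a J-set.

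The only delicate point is this right-ideal argument: everything else is formal, but here one must exploit the specific shape of $\chi(m,a,t,f)$—that its rightmost factor is an $a$-letter with no trailing $f(t(\cdot))$—in order to absorb the witness $h$ into a modified parameter sequence $a'$.
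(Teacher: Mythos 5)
Your proof is correct and, in its main step (the right-ideal direction, absorbing the witness $h$ from the intersection of the translates $\chi(m,a,t,f)^{-1}X$ into the last coordinate $a(m+1)$), it is essentially identical to the paper's argument, as is the closedness/compactness step. The only difference is minor: for the left-ideal direction the paper invokes the general fact that a closed $G(M)$-invariant subset of $S^\ext_G(M)$ is a left ideal, whereas you argue directly by picking a single $g$ with $g^{-1}X\in p$ and using the translation remark; both are fine.
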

\begin{proof}
Clearly,  $\JJ_G(M)$ is the intersection of $[X]:=\{p\in S^\ext_G(M): X\in p\}$ for all externally definable $X$ with $G(M)\backslash X$ not a J-set for $G$ in $M$. Hence, $\JJ_G(M)$ is closed in $S^\ext_G(M)$ and is compact. As $\JJ_G(M)$ is closed by the $G(M)$-action, it is a left ideal of $S^\ext_G(M)$.

Let $X\in p*q$. We show that $X$ is a J-set in $M$. Let $F\in \Pf(G(M)^\N)$.
As $\{g\in G(M): g^{-1}X\in q\}$ is in $p$ and hence a J-set of $G$ in $M$, there are $m\in\N$, $t\in \jj_m$ and $a\in G(M)^{m+1}$ such that for every $f\in F$,
$\chi(m,a,t,f)^{-1}\cdot X\in q$. Then $\bigcap_{f\in F}\chi(m,a,t,f)^{-1}\cdot X\in q$ is nonempty and we take $b$ from it. Then $\chi(m,a,t,f)b\in X$ for every $f\in F$. We let $c\in G(M)^{m=1}$ with $c(i)=a(i)$ for $i\leq m$ and $c(m+1)=a(m+1)b$. Then $\chi(m,c,t,f)\in X$ for every $f\in F$.
\end{proof}

As $\JJ_G(M)$ is a two-sided ideal of $S^\ext_G(M)$, $\JJ_G(G)\supset \AP_G(M)$. Then by closeness of $\JJ_G(M)$ in $S^\ext_G(M)$, we have $\JJ_G(M)\supset \WGen_G(M)$. In summary, we have that

\begin{coro}
Every definable weakly generic sets of $G$ in $M$ is a J-set of $G$ in $M$.
\end{coro}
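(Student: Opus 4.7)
The plan is to combine the previous proposition with the abstract structure of the compact right topological semigroup $S^\ext_G(M)$. By that proposition, $\JJ_G(M)$ is a closed two-sided ideal of $S^\ext_G(M)$. Since $\AP_G(M)$ is by definition the minimal two-sided ideal, I immediately obtain $\AP_G(M) \subseteq \JJ_G(M)$. Closedness of $\JJ_G(M)$ then yields
\[
\WGen_G(M) \;=\; \overline{\AP_G(M)} \;\subseteq\; \JJ_G(M),
\]
so every weakly generic type is a J-type.

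To pass from types to individual definable sets, I would invoke the characterization recalled just before the statement, namely Corollary 1.8 of \cite{Newelski}: a definable set $X \subseteq G(M)$ is weakly generic precisely when it belongs to some $p \in \WGen_G(M)$. For such $X$ and $p$, the inclusion above places $p$ in $\JJ_G(M)$, i.e.\ $p$ is a J-type, and hence every member of $p$ --- in particular $X$ itself --- is a J-set of $G$ in $M$.

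There is no genuine obstacle: once the previous proposition is in hand, the argument is bookkeeping. The only point requiring an external reference is the type-to-set translation via Newelski's Corollary 1.8; alternatively, one could extend a weakly generic definable set to a weakly generic type by a Zorn-style argument (using that weakly generic sets form a family with the finite intersection property up to non-generic defects), but the cited characterization makes this detour unnecessary.
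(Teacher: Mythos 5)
Your proposal is correct and follows essentially the same route as the paper: the preceding proposition makes $\JJ_G(M)$ a closed two-sided ideal, so it contains the minimal two-sided ideal $\AP_G(M)$ and hence its closure $\WGen_G(M)$, and the passage from weakly generic types back to definable sets via Newelski's Corollary 1.8 is exactly how the paper (implicitly) concludes.
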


Now we study the J-sets in different models:

\begin{prop}
Assume that $M\prec N$ and $X\subset G$ is definable in $M$.
If $X(N)$ is a J-set of $G$ in $N$, then $X$ is a J-set of $G$ in $M$. Moreover, if $M$ is $\aleph_0$-saturated, the inverse holds. 
\end{prop}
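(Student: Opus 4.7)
The easy direction uses only elementarity. Given $F \in \Pf(G(M)^\N)$, I invoke the J-set hypothesis in $N$ to obtain $m \in \N$, $t \in \jj_m$, and $a \in G(N)^{m+1}$ with $\chi(m,a,t,f) \in X$ for every $f \in F$. For the fixed $m, t$, the existential sentence asserting the existence of such an $a$ is first-order with parameters in the finite set $A \cup \{f(t(i)) : f \in F,\ 1 \le i < m\} \subset M$, where $A \subset M$ is a finite parameter set defining $X$; so by $M \prec N$ the witness descends to $G(M)^{m+1}$.

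For the converse, assume $M$ is $\aleph_0$-saturated, $X$ is a J-set in $M$, and $F = \{f_1,\ldots,f_k\} \in \Pf(G(N)^\N)$. The plan is to construct a companion family $(g_j)_{j \le k} \subset G(M)^\N$ that mimics $(f_j)_{j \le k}$ in the following sense: for every $n \in \N$, the tuple $(g_j(i))_{j \le k,\ i \le n}$ realizes in $M$ the same complete type over $A$ as $(f_j(i))_{j \le k,\ i \le n}$ realizes in $N$. This is done by induction on $n$. At stage $n$, the induction hypothesis makes the map fixing $A$ and sending $f_j(i) \mapsto g_j(i)$ for $j \le k,\ i < n$ partial elementary; transporting the complete type of $(f_j(n))_{j \le k}$ over $A \cup \{f_j(i) : j \le k,\ i < n\}$ across it yields a complete type over a finite subset of $M$, which by $\aleph_0$-saturation is realized by some $(g_j(n))_{j \le k}$ in $M$.

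Applying the J-set hypothesis in $M$ to $(g_j)_{j \le k}$ produces $m \in \N$, $t \in \jj_m$, and $a \in G(M)^{m+1}$ with $\chi(m,a,t,g_j) \in X$ for all $j$. Only the finitely many values $(g_j(t(i)))_{j,i}$ are used, and by construction these share a type over $A$ with $(f_j(t(i)))_{j,i}$; since the formula $\exists b \in G^{m+1}\ \bigwedge_j \chi(m,b,t,\bar y_j) \in X$ is first-order over $A$, it transfers to the $f_j$-tuple in $N$, yielding the required $a \in G(N)^{m+1}$.

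The main obstacle I anticipate is the apparent tension between the J-set condition, which ranges over countable sequences in $G^\N$, and $\aleph_0$-saturation, which realizes only finite-variable types over finite parameter sets. The inductive construction sidesteps this by matching $(g_j)_{j \le k}$ to $(f_j)_{j \le k}$ one index at a time against an enlarging but always finite parameter set; each instance of the J-set property consumes only finitely many function values, so this finitary matching is enough.
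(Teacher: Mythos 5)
Your proposal is correct and is essentially the paper's own argument: the easy direction is the same elementarity transfer of the witness tuple over the finitely many relevant parameters, and for the converse the paper performs the identical inductive $\aleph_0$-saturation construction of a companion family in $M$ realizing the same type over the (finite) defining parameters of $X$ and $G$. The only cosmetic difference is that the paper argues contrapositively (copying a counterexample family from $N$ into $M$ and contradicting the J-set property there), while you argue directly and transfer the existential witness back to $N$; the substance is the same.
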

\begin{proof}
Let $F\in \Pf(G(M)^\N)$ be arbitrary. Then there are $m\in \N$, $a\in G(N)^{m+1}\}$ and $t\in\jj_m$ such that $\chi(m,a,t,f)\in X$ for every $f\in F$. Then
$$N\models\exists y_1\dots y_{m+1}\bigwedge_{f\in F} \text{``}\chi(m,y_1,\dots, y_{m+1}, t,f)\in X\text{''}$$
As $M\prec N$ and the statement is over $M$, we have
$$M\models\exists y_1\dots y_{m+1}\bigwedge_{f\in F} \text{``}\chi(m,y_1,\dots, y_{m+1}, t,f)\in X\text{''}$$
Then there is $a'\in G(M)^{m+1}$ such that $\chi(m,a',t,f)\in X$ for every $f\in F$. 

Now we show the ``moreover'' part. If $X(N)$ is not a J-set of $G$ in $N$, then there are $f_1,\dots, f_n\in G(N)^\N$ such that for every $m\in\N$ and $t\in \jj_m$,
$$N\models \forall y_1\dots y_{m+1} \bigvee_{i=1}^n \text{``}\chi(m,y_1,\dots, y_{m+1},t,f_i)\notin X\text{''}$$ 
Let $C$ be a finite set of $M$ over which $X$ and $G$ are defined. As $M$ is $\aleph_0$-saturated, after reordering the index set of $f_{i,j}$ by $(\N,<)$, we can find $g_{i,j}\in G(M)$ inductively such that $$\tp(g_{i,j}:1\leq i\leq n, j\in \N/C)=\tp(f_{i,j}:1\leq i\leq n, j\in \N/C)$$
 Let $g_i\in G(M)^\N$ with $g_i(j)=g_{i,j}$. Then for every $m\in\N$ and $t\in \jj_m$,
$$M\models \forall y_1\dots y_{m+1} \bigvee_{i=1}^n \text{``}\chi(m,y_1,\dots, y_{m+1},t,g_i)\notin X\text{''}$$  which is a contradiction.
\end{proof}

And, for weakly generic sets, we have
\begin{fact}[Lemma 2.1 \cite{Newelski}]
Assume that $M\prec N$ and $X\subset G$ is definable in $M$.
If $X(M)$ is weakly generic for $G(M)$, then $X(N)$ is weakly generic for $G(N)$. Moreover, if $M$ is $\aleph_0$-saturated, the inverse holds. 
\end{fact}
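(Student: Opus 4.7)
The plan is to observe that genericity and non-genericity of a definable set can be expressed by first-order formulas or countable schemata in the defining parameters, and then to exploit elementarity for the forward direction and $\aleph_0$-saturation of $M$ for the backward direction, in direct parallel with the proof of the preceding proposition on J-sets.

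For the forward direction, assume $X(M)$ is weakly generic in $G(M)$, witnessed by an $M$-definable set $Y$ such that $Y(M)$ is non-generic in $G(M)$ and $X(M)\cup Y(M)$ is generic in $G(M)$. The latter says $G(M)=\bigcup_{i=1}^k g_i(X(M)\cup Y(M))$ for some $k$ and $g_1,\dots,g_k\in G(M)$, which is an $\exists$-statement with parameters in $M$; by elementarity the same holds in $N$, so $X(N)\cup Y(N)$ is generic in $G(N)$. If $Y(N)$ were generic in $G(N)$, then one would have $G(N)=\bigcup_{j=1}^l h_j Y(N)$ for some $l$ and $h_j\in G(N)$, another $\exists$-statement over $M$; elementarity would then give a covering of $G(M)$ by finitely many translates of $Y(M)$, contradicting non-genericity of $Y(M)$. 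Hence $Y$ still witnesses weak genericity of $X(N)$ in $G(N)$.

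For the backward direction, assume $M$ is $\aleph_0$-saturated and $X(N)$ is weakly generic in $G(N)$, witnessed by an $N$-definable set $Y=\varphi(G,c)$ with $c\in N$ such that $G(N)$ is covered by $k$ translates of $X(N)\cup Y(N)$ and $Y(N)$ is non-generic in $G(N)$. I would consider the partial type $p(z)$, over the (finitely many) parameters from $M$ defining $X$ and $G$, consisting of the single formula
\[
\exists z_1,\dots,z_k\ \forall x\ \bigvee_{i=1}^k \bigl(z_i^{-1}x\in X\vee z_i^{-1}x\in \varphi(G,z)\bigr),
\]
together with, for each $l\geq 1$, the formula
\[
\forall w_1,\dots,w_l\ \exists x\ \bigwedge_{j=1}^l w_j^{-1}x\notin \varphi(G,z).
\]
The parameter $c$ realizes every formula of $p$ in $N$, so by elementarity every finite fragment of $p$ is already realized in $M$; thus $p$ is finitely satisfiable in $M$. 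By $\aleph_0$-saturation, $p$ is realized by some $c'\in M$, and then $Y':=\varphi(G,c')$ is an $M$-definable set witnessing weak genericity of $X(M)$ in $G(M)$.

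The main subtlety is that the witness $Y$ on the $N$-side need not be $M$-definable, and non-genericity is a countable schema indexed by $l$ rather than a single first-order statement; the $\aleph_0$-saturation of $M$ is precisely what lets us collapse this schema and transfer the $N$-definable witness back to an $M$-definable one. The forward direction, by contrast, requires only elementarity with the same witness $Y$ on both sides, and is entirely analogous to the forward direction of the preceding proposition on J-sets.
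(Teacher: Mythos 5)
Your proof is correct: the forward direction is a routine transfer of the finite-cover statements by elementarity (with the same $M$-definable witness $Y$), and the backward direction correctly isolates the real issue --- that the witness on the $N$-side has parameters in $N$ and non-genericity is an infinite schema --- and resolves it by realizing, via $\aleph_0$-saturation, the finitely satisfiable partial type in the witness's parameter variable over the finitely many parameters of $X$ and $G$. The paper itself gives no proof of this statement (it is quoted as a Fact from Newelski's Lemma 2.1), and your argument is the standard one, structurally parallel to the paper's own saturation/transfer proof of the preceding proposition on J-sets.
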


Then we have

\begin{prop}
Assume that $M\prec N$ and they are both $\aleph_0$-saturated. Then J-sets of $G(M)$ coincide with weakly generic sets of $G(M)$ iff J-sets of $G(N)$ coincide with weakly generic sets of $G(N)$.
\end{prop}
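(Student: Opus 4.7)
The plan is to prove the two directions separately: the ``$\Leftarrow$'' direction will follow almost immediately from the Proposition and Fact already in hand, while the ``$\Rightarrow$'' direction requires reducing an arbitrary $N$-definable set to an $M$-definable one via a parameter change, and that reduction is where the main obstacle lies.

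For ``$\Leftarrow$'' (assume J $=$ WGen in $N$, conclude the same for $M$): given any $M$-definable $Y \sq G$, we apply the preceding Proposition (using $\aleph_0$-saturation of $M$) to get that $Y$ is a J-set in $M$ iff $Y(N)$ is a J-set in $N$, and the Fact from \cite{Newelski} to get the analogous equivalence for weak genericity. Chaining these with the hypothesis for $N$ yields the conclusion for $M$.

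For ``$\Rightarrow$'' (assume J $=$ WGen in $M$): given any $N$-definable $Y = \phi(G(N), \bar{b})$ with $\bar{b} \in N$, we first use $\aleph_0$-saturation of $M$ to pick $\bar{b}' \in M$ with $\tp(\bar{b}'/c_0) = \tp(\bar{b}/c_0)$, where $c_0 \sq M$ is a finite parameter set defining $G$. Let $X$ be the set defined by $\phi(x, \bar{b}')$, so $X(M) = \phi(G(M), \bar{b}')$ is $M$-definable.

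The hard part will be showing that $Y$ is a J-set in $N$ iff $X(M)$ is a J-set in $M$ (and analogously for weak genericity), since only then can the hypothesis on $M$ be invoked. To bridge this, we fix a sufficiently saturated and strongly homogeneous elementary extension $N^* \succ N$ containing $M$, and use strong homogeneity to produce $\sigma \in \mathrm{Aut}(N^*/c_0)$ with $\sigma(\bar{b}) = \bar{b}'$. The key observation is that J-ness and weak genericity of a subset of $G(N^*)$ are invariant under automorphisms of $N^*$, since the conditions on the tuples $\chi(m,a,t,f)$ and on translates are permuted by $\sigma$; so $\sigma(Y(N^*)) = \phi(G(N^*), \bar{b}') = X(N^*)$ has the same J/WGen status in $N^*$ as $Y(N^*)$. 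Applying the preceding Proposition both to $N \prec N^*$ and to $M \prec N^*$ (both being $\aleph_0$-saturated) then gives the chain: $Y$ is J in $N$ iff $Y(N^*)$ is J in $N^*$ iff $X(N^*)$ is J in $N^*$ iff $X(M)$ is J in $M$, and similarly for weak genericity via the Fact. The hypothesis $X(M)$ J iff $X(M)$ WGen in $M$ then completes the argument by giving $Y$ J iff $Y$ WGen in $N$.
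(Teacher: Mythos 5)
Your proposal is correct and follows essentially the same route as the paper: realize the defining parameters inside $M$ by $\aleph_0$-saturation, move the set by an automorphism of a large homogeneous model (the paper simply reduces to $N=\M$ and works in the monster, whereas you pass to an auxiliary $N^*\succ N$), use invariance of J-ness and weak genericity under such automorphisms, and transfer between models via the preceding Proposition and the Fact from \cite{Newelski}. The only differences are bookkeeping (explicit parameter set $c_0$, a direct equivalence chain instead of an argument by contradiction), not substance.
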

\begin{proof}
The direction $\Leftarrow$ is obvious from the above results. 

For the other direction, we assume that J-sets of $G(M)$ coincide with weakly generic sets of $G(M)$. Since the direction $\Leftarrow$ is known, we may assume that $N=\M$.
For convenience, we assume that $G$ is $0$-definable.
If the result fails, there are a formula $\varphi(x;z)$ without parameters and a tuple $c$ in $\M$ such that
$X=\varphi(\M,c)\subset G(\M)$ is, in $\M$, a J-set but not weakly generic. As $M$ is $\aleph_0$-saturated, there is a tuple $c'$ in $M$ such that $\tp(c')=\tp(c)$.
Then there is an isomorphism $\sigma$ of $\M$ such that $c'=\sigma(c)$. Obviously, $X':=\varphi(\M,c')$ is, in $\M$, a J-set but not weakly generic. Then $X'(M)=\varphi(M,c')$ is, in $M$, a J-set but not weakly generic, which is a contradiction.
\end{proof}

Since whether J-sets coincide with weakly generic sets is invariant on $\aleph_0$-saturated models, we have

\begin{definition}
We say that a definable group $G$ has \textbf{J-property} if every weakly generic set of $G$ in $M$ is a J-set of $G$ in $M$ for some (any) $\aleph_0$-saturated model $M$.
\end{definition}

In particular, we can study the J-property in the monster $\M$. And for convenience, if we do not say in which model a definable set of $G$ is a J-set, then it is considered in $\M$.

\section{Positive results for commutative groups}\label{section-positive}

As $\AP_G(M)=\WGen_G(M)=\Gen_G(M)$ for stable groups, we study the relation between the properties on $S^\ext_G(M)$ and the place of the induced theory of $G$ in $T=\Th(M)$ in the classification theory. 

Now we show that every superstable commutative group has J-property. Note that as every type over $M$ is definable, every externally definable set is in fact definable and $S^\ext_G(M)=S_G(M)$. For $p\in S_G(M)$ and $N\prec M$, by $p|N$ we denote the restriction of $p$ on $N$.

\begin{theorem}\label{superstable}
Assume that $G$ is commutative and superstable. Then $G$ has J-property. 
\end{theorem}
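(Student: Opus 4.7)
The plan is to reduce the J-property to the inclusion $\JJ_G(\M) \subseteq \Gen_G(\M)$ and then to argue this by contradiction using Morley sequences and $U$-rank. Since $G$ is superstable, every type over $\M$ is definable, so $S^\ext_G(\M) = S_G(\M)$, and by \cite{Newelski} we have $\WGen_G(\M) = \Gen_G(\M) = \AP_G(\M)$, a nonempty compact subsemigroup of $S_G(\M)$. A definable set is weakly generic iff it contains a generic type. Since the reverse inclusion was already observed, the J-property amounts to every J-type being generic.

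Next, any definable J-set extends to a J-type: Lemma 14.14.6 of \cite{Hindman} ($X\cup Y$ a J-set implies $X$ or $Y$ is) gives the FIP for the family of definable J-sets containing a given one, and any type extension of this filter lies in $\JJ_G(\M)$. So it suffices to show that a J-type $p \in S_G(\M)$ is generic. Suppose not, and pick $\phi(x) \in p$ that is not generic, i.e., no finite collection of $G$-translates of $\phi$ covers $G$. By commutativity of $G$, the J-set condition for $\phi$ and $F = \{f_1,\dots,f_N\} \in \Pf(G^\N)$ reduces to the existence of $b \in G(\M)$, $m \in \N$ and $t \in \jj_m$ such that $b + \sum_{i=1}^m f_j(t(i)) \in \phi$ for every $j$. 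I aim to construct an $F$ such that no such $b,m,t$ exist.

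Using superstability, let $q$ denote the generic type of $G^0$ over a small $C \subset \M$ containing parameters of $\phi$, and let $(h_n)_{n \in \N}$ be a Morley sequence of $q$ over $C$ inside $\M$. By standard facts about Morley sequences in commutative stable groups, the sums $h_{t(1)} + \cdots + h_{t(m)}$ realize the generic type of $G^0$ over $C$. I take $f_j(n) = h_n + d_j$ for a carefully chosen family $\{d_j\}_{j=1}^N$ representing distinct cosets modulo a type-definable stabilizer attached to $\phi$. The resulting J-set constraint becomes $b + m d_j + H \in \phi$ for $H = \sum_i h_{t(i)}$ and all $j$, which forces $H$ to lie in the intersection $\bigcap_j(\phi - b - m d_j)$. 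For $N$ larger than the index of the stabilizer, this intersection can be arranged to have non-generic intersection with the generic coset containing $H$, giving the contradiction.

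The main obstacle is the careful choice of $\{d_j\}$ and the verification that $\bigcap_j(\phi - b - m d_j)$ cannot contain any $H$ realizing $q$ over $C$. This needs: (i) extracting from non-genericity of $\phi$ a suitable stabilizer subgroup of $G^0$ of positive $U$-rank codimension, using superstability and commutativity; (ii) $U$-rank additivity together with indiscernibility of the Morley sequence to argue that for $N$ large no generic $H$ lies in the intersection; (iii) accommodating the fact that $b$ depends on $F$ via a compactness/indiscernibility argument. These superstability ingredients (notably $U$-rank and the structure of definable subgroups of finite index in superstable commutative groups) should make the argument go through.
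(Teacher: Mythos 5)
Your reduction (for a superstable group weakly generic $=$ generic, so it suffices to show every definable J-set is generic, equivalently that every non-generic definable set fails to be a J-set) is fine and targets the same statement the paper proves. But the construction at the heart of your contrapositive argument collapses. In the definition of a J-set the translating tuple (your $b$) is chosen \emph{after} the finite family $F$, with no constraint whatsoever; since all of your $f_j$ share the same Morley sequence $(h_n)$ and differ only by the constant shifts $d_j$, the common sum $H=\sum_i h_{t(i)}$ is simply absorbed into $b$: the condition ``there exist $m,t,b$ with $b+H+md_j\in\phi$ for all $j$'' is equivalent to ``there exist $m\geq 1$ and $y\in G$ with $y+md_j\in\phi$ for all $j$''. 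So the Morley sequence, the genericity of $H$ over $C$, and the $U$-rank computations do no work at all; what you would actually have to prove is that for every non-generic $\phi$ there are finitely many $d_j$ with $\bigcap_j(\phi-md_j)=\emptyset$ for every $m\geq 1$, which is essentially the theorem in contrapositive form and for which no argument is offered. Two further points in the sketch are not sound as stated: ``the intersection is non-generic'' is far weaker than ``the intersection is empty'' or ``contains no realization of $q|C$'' (a non-generic set can contain elements generic over the small set $C$, precisely because $b$ brings in new parameters), and non-genericity of $\phi$ does not yield any stabilizer of finite or bounded index, so the clause ``$N$ larger than the index of the stabilizer'' has no referent once the stabilizer is allowed positive $U$-rank codimension.

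For comparison, the paper runs the argument in the other direction and, crucially, makes the family consist of pairwise distinct Morley sequences so that a single translation cannot cancel them: fix an idempotent generic $p$, a chain $(M_i)$ with $M_{i+1}$ being $|M_i|^+$-saturated, and Morley sequences $f_{i+1}\in G(M_{i+1})^\N$ of $p|M_i$; applying the J-set property of $X$ to $\{f_1,\dots,f_k\}$ produces $a_k$ and sums $b_{i,k}=\sum_j f_i(t_k(j))$ which, for fixed $k$, form a Morley sequence of $p$. Stability (finite alternation for the formula $x+y\in X$) then gives a single $a$ with $a+b_i\in X$ for all but boundedly many $i$, and superstability (only finitely many forking extensions along the chain $\tp(a/M_i)$) gives some stage with $a\forkindep_{M_{l'}} b_{l'+1}$, whence $\tp(a+b_{l'+1}/M_{l'})=\tp(a/M_{l'})*p|M_{l'}$ is generic and contains $X$. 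If you want to salvage your plan, the family members must differ by data that is ``independent'' across the family and cannot be absorbed into one translate --- which is exactly what the paper's choice of distinct Morley sequences over an increasing chain accomplishes.
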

\begin{proof}
Assume that $G$ is superstable and
$X=X(\M)$ is a definable J-set of $G$. For convenience, we assume that they are $0$-definable.

Let $p\in S_G(\M)$ be a generic global type with $p*p=p$. Let $(M_i)_{i\in\N}$ be an elementary increasing chain where $M_{i+1}$ is $|M_i|^+$-saturated for each $i$. 
Let $f_1\in G(M_1)^\N$ be a Morley sequence of $p|{M_0}$ and inductively, $f_{i+1}\in G(M_{i+1})^\N$ be a Morley sequence of $p|{M_{i}}$.

As $X$ is a J-set of $G$, for each $k\in\N$ and $\{f_i:1\leq i\leq k\}$, there are $m_k\in\N$, $a_k\in G$ and $t_k\in\jj_{m_k}$ such that for every $i\leq k$,
$$a_k+\sum_{j=1}^{m_k} f_i(t_k(j))\in X$$
Let $b_{i,k}=\sum_{j=1}^{m_k} f_i(t_k(j))$ for $i,k\in\N$ and $\varphi(x;y):=\text{``}x,y\in G \text{ and } x+y\in X\text{''}$. Then $\M\models\varphi(a_k, b_{i,k})$ when $i\leq k$.

\begin{claim-star}
For each $k\in\N$, the sequence $(b_{k,i})_{k\in\N}$ is a Morley sequence of $p$.
\end{claim-star}
\begin{claimproof}
Note that $b_{i,k}\in G(M_i)$ for every $i,k$.
As $p*p=p$, we have $p|{M_i}*p|{M_i}=p|{M_i}$ and consequently $\tp(b_{i+1,k}/M_i)=p|_{M_i}$ for every $i,k$. It is enough because $G$ is stable (see \cite{Stable} Fact 7.3).
\end{claimproof}

Then as $G$ is stable, by Proposition 7.6 in \cite{Stable}, there is $l\in\N$ which depends only on $\varphi(x;y)$ such that for every $k\in\N$,
either $|\{i\in\N: \M\models \varphi(a_k;b_{i,k})\}|<l$ or $|\{i\in\N: \M\models \neg\varphi(a_k;b_{i,k})\}|<l$.
We let $a=a_{l}$ and $b_i=b_{i,l}$ for $i\in\N$. Then $|\{i\in\N: \M\models \neg\varphi(a;b_{i})\}|<l$ and there is $l_1\in \N$ such that $\M\models\varphi(a;b_i)$ for every $i\geq l_1$. 

Then as $G$ is superstable, by Proposition 5.7 in \cite{Stable}, the extension of $\tp(a/M_{i+1})\supset \tp(a/M_i)$ forks only for finitely many $i$, and there is $l_2\in\N$ such that $\tp(a/M_{i+1})$ is nonforking over $M_i$ for $i\geq l_2$.

Take $l'> \max{l_1,l_2}$. Let $M=M_{l'}$ and $b=b_{l'+1}$. Then $a$ and $b$ are independent over $M$. Hence, 
$$\tp(a+b/M)=\tp(a/M)*\tp(b/M)=\tp(a/M)*p|M$$
which is generic. 
As $\M\models \varphi(a;b)$, we have $X(M)\in \tp(a+b/M)$. Hence, $X$ is generic. 
\end{proof}

As a corollary, every algebraic group in an algebraically closed field has J-property. So it is reasonable to study the J-property for definable groups in geometric fields. 
Now give two easy examples in $\pCF$, the theory of $\Q_p$ in the language of rings. More researches on this topic will be left to the future. As $\pCF$ is not stable, the examples shows that superstability is not necessary for J-property. 

When $T=\pCF$, we let $v$ be the valuation, $\cO=\cO_\M$ the valuation ring, and $\Gamma=\Gamma_\M$ the value group. It is well-known that $\pCF$ has cell decomposition and quantifier elimination in Macintyre's language which will not give more definable sets. More details can be found in \cite{pCF}.

\begin{prop}
Let $T=\pCF$ and $G=\Ga$, the additive group of $\M$. Then $G$ has J-property.
\end{prop}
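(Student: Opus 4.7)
My plan is to prove the nontrivial direction of the J-property: every definable J-set of $\Ga(\M)$ is weakly generic (the converse is the corollary above). Using $p$-adic cell decomposition together with Lemma 14.14.6 of \cite{Hindman} (which makes both J-set-hood and weak genericity ``prime'' under finite unions), I may reduce to the case that $X$ is a single cell of the form $X = \{x \in \M : v(x - c) \in I,\ (x - c)/\mu \in (\M^\times)^n\}$ for some $c \in \M$, $\mu \in \M^\times$, $n \geq 1$, and interval $I \subseteq \Gamma_\M$. My goal is then to show that $X$ is a J-set iff $I$ is unbounded below iff $X$ is weakly generic.

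For the direction ``J-set $\Rightarrow$ $I$ unbounded below'', suppose for contradiction that $I \subseteq [\alpha, +\infty)$ for some $\alpha \in \Gamma_\M$, so $X - X \subseteq \{z : v(z) \geq \alpha\}$. By $\aleph_0$-saturation, pick a sequence $b \in \Ga(\M)^\N$ with $v(b(j))$ strictly decreasing in $\Gamma_\M$ and all values below $\alpha$. Applying the J-set condition to the family $F = \{\mathbf{0}, b\}$ (with $\mathbf{0}$ the zero sequence) yields $m \in \N$, $t \in \jj_m$, and $a$ with $a \in X$ and $a + \sum_j b(t_j) \in X$, whence $\sum_j b(t_j) \in X - X$. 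Since the $v(b(t_j))$ are strictly decreasing (in particular all distinct), $v(\sum_j b(t_j)) = v(b(t_m)) < \alpha$, contradicting $X - X \subseteq \{v \geq \alpha\}$.

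For the direction ``$I$ unbounded below $\Rightarrow X$ weakly generic'', let $Y$ be the same cell as $X$ but with $(x-c)/\mu \in (\M^\times)^n$ replaced by $(x-c)/\mu \notin (\M^\times)^n$; then $X \cup Y \supseteq \{x : v(x - c) < \beta\}$ for some $\beta \in \Gamma_\M$ (since $I$ is unbounded below), which is generic, being covered by the two translates by $0$ and by any $a_0$ with $v(a_0) < \beta$. To see $Y$ is non-generic, invoke Hensel's lemma to fix $k = k(n, p)$ such that $1 + \mathfrak{m}^k \subseteq (\M^\times)^n$, where $\mathfrak{m}$ is the maximal ideal of $\cO$. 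Given any $\{a_1, \ldots, a_r\} \subset \M$, pick $x \in X$ with $v(x - c)$ so small that $v(a_i/(x - c)) \geq k$ for every $i$; then $1 - a_i/(x-c) \in 1 + \mathfrak{m}^k \subseteq (\M^\times)^n$, so $x - c - a_i$ and $x - c$ lie in the same $(\M^\times)^n$-coset, placing $x$ in $X$ but outside every $Y + a_i$. Thus finite translates of $Y$ cannot cover the low-valuation part of $X$, so $Y$ is non-generic and $X$ is weakly generic. The main technical obstacle is this last Hensel-stability step for $n$-th power cosets under additive perturbation, with a mild dependence on the $p$-adic valuation of $n$ entering through the constant $k$.
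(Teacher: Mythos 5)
Your proof is correct, and its first half is essentially the paper's own argument: the paper also rules out bounded J-sets by feeding sequences of controlled valuation into the J-set condition --- it takes two nonzero sequences $f_1,f_2$ with $v(f_1(j))<v(f_2(j))\leq\alpha$ and compares the two resulting points of $X$, whereas your pairing of $b$ with the zero sequence is a slightly cleaner way to land $\sum_j b(t(j))$ directly in $X-X$; either way one concludes that a J-set must contain points of arbitrarily negative valuation. Where you genuinely diverge is the second half: the paper simply asserts that every unbounded definable subset of $\Ga$ is weakly generic (``known and easy to check''), while you actually prove it --- you reduce by cell decomposition (Lemma 14.14.6 of Hindman--Strauss passes J-set-hood down to a single cell, and upward monotonicity of weak genericity passes the conclusion back up), and for an unbounded-below cell you exhibit the explicit non-generic companion $Y$ (same ball condition, complementary $n$-th power coset), note that $X\cup Y$ contains a set $\{x: v(x-c)<\beta\}$ covered by two translates, and verify non-genericity of $Y$ via the Hensel-type inclusion $1+\mathfrak{m}^k\subseteq(\M^\times)^n$, which transfers from $\Q_p$ to $\M$ because it is first order. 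So your route costs the cell-decomposition reduction and some $n$-th-power bookkeeping, but buys a self-contained proof of the step the paper leaves as a citation. Two minor points to tidy: one-variable cells also include singletons (the degenerate coset $\lambda=0$), which your boundedness argument disposes of anyway; and you should say explicitly that an unbounded-below cell contains points of arbitrarily low valuation (the coset condition only constrains $v(x-c)$ modulo $n\Gamma$, which is coinitial in $\Gamma_\M$), since the witnesses $x$ in your non-genericity argument for $Y$ require this.
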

\begin{proof}
Let $X$ be a definable J-set of $G$. Let $f_1, f_2\in G^\N$ such that $v(f_i(j))<v(f_i(j+1))$ and $v(f_1(j))<v(f_2(j))$ for every $i=1,2$ and $j\in\N$. Then there are $m\in\N$, $a\in G$ and $t\in\jj_{m}$ such that for $i=1,2$,
$a+\sum_{j=1}^{m} f_i(t(j))\in X$. Note that at least one of 
$a+\sum_{j=1}^{m} f_i(t(j))$ has valuation $\leq v(f_2(t(1)))$.
As for arbitrary $\alpha\in\Gamma_\M$, by saturation, we can let $v(f_2(j))\leq\alpha$ for every $j\in\N$, we have that $X$ is unbounded. 
It is known and easy to check that every unbounded definable set of $G$ is weakly generic which completes the proof.
\end{proof}

\begin{prop}
Let $T=\pCF$ and $G=(\cO,+)$, the additive group of the valuation ring of $\M$. Then $G$ has J-property.
\end{prop}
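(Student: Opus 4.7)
The plan is to show that every definable J-set $X \subseteq \cO(\M)$ contains a ball $a + p^n\cO$ with $n \in \N$ standard. Such an $X$ will then be generic (covered by the $p^n$ translates corresponding to cosets of $p^n\cO$), and since $(\cO,+)$ is definably compact in the NIP theory $\pCF$, it is fsg, so $\Gen_{(\cO,+)}(\M) \neq \emptyset$ and $\WGen = \Gen$; this yields weak genericity. (Alternatively, one shows directly by cell decomposition that for every definable $U \subseteq \cO$, either $U$ or $\cO \setminus U$ contains a standard ball, from which existence of a generic type follows.)

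By cell decomposition in $\pCF$ and the partition regularity of J-sets (Lemma 14.14.6 of \cite{Hindman}, cited in Section~\ref{section-basic}), I may reduce to the case that $X$ is a single cell, and not a singleton: a singleton $\{x_0\}$ fails to be a J-set because taking $F = \{f_0 \equiv 0,\ f_1 \equiv 1\}$ would force both $s := \sum_i a_i$ and $s + m$ to equal $x_0$, giving $m = 0$, contrary to $m \geq 1$. A non-singleton cell in $\cO$ has the shape $\{x : \alpha_1 \leq v(x-c) \leq \alpha_2,\ x - c \in \lambda P_n\}$ (with the inequalities possibly strict) for some center $c \in \M$, inner radius $\alpha_1 \in \Gamma_\M \cup \{-\infty\}$, outer radius $\alpha_2$, and $n$-th power coset data $(\lambda, n)$.

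If $\alpha_1$ is positive and non-standard, then $X \subseteq c + p^{\alpha_1}\cO$, and applying the J-set condition with $F = \{f_0 \equiv 0,\ f_1 \equiv 1\}$ yields $m \in \N$ and $a \in \cO^{m+1}$ with both $s := \sum_i a_i$ and $s + m$ in $X$; the ultrametric inequality then forces $v(m) \geq \alpha_1$, contradicting $v(m) \in \N$. Hence $\alpha_1$ is standard. In this remaining case, $X$ contains a standard ball: using that $\Gamma_\M$ is a $\Z$-group, pick a standard integer $m \geq \alpha_1$ with $m \equiv v(\lambda) \pmod n$, then $z \in \M^*$ with $v(z) = (m - v(\lambda))/n$, and set $x_0 := c + \lambda z^n$; this $x_0$ lies in $X$, and $v(x_0 - c) = m$ is standard. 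Since $(\cO^*)^n \supseteq 1 + p^{K(n)}\cO$ for some standard integer $K(n)$, the ball $x_0 + p^{m + K(n)}\cO$ of standard radius is contained in $X$.

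The main technical obstacle is the last paragraph: verifying that every cell in $\cO$ with standard inner radius genuinely contains a standard-radius ball. This relies on (i) the $\Z$-group structure of $\Gamma_\M$ to find $m$ in the right congruence class, and (ii) the classical openness of the $n$-th power subgroup $(\cO^*)^n$ in $\cO^*$ (a basic fact about $p$-adically closed fields). The cell analysis with $P_n$-predicates is where any bookkeeping will concentrate; everything else reduces to the simple two-element family $F = \{0,1\}$ together with the ultrametric inequality.
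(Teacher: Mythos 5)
Your proof is correct and ends in the same place as the paper's --- a definable J-set of $(\cO,+)$ must contain a translate of $p^l\cO$ for some standard $l$, hence is generic and a fortiori weakly generic --- but the division of labor between the J-set hypothesis and cell decomposition is genuinely different. The paper feeds the J-set condition the families $F_k=\{f_1,f_2\}$ with $v(f_i(j))=i+j+k$; the two witnesses $a+\sum_j f_1(t(j))$ and $a+\sum_j f_2(t(j))$ then differ by an element of valuation $1+k+t(1)$, a \emph{standard} integer $\geq k$, so letting $k$ vary produces pairs of points of $X$ at standard distances beyond every standard bound, and a single crude appeal to cell decomposition finishes (a finite union of cells none of which contains a standard-radius ball realizes only finitely many standard values of $v(b-c)$). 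You instead use the single family $\{0,1\}$, which yields one pair $s,\,s+m$ whose distance $v(m)$ is standard but cannot be pushed deep; this weaker input would not suffice against an arbitrary finite union of cells (a singleton plus one thin cell can realize a single standard distance without containing any standard ball), and you correctly compensate by first invoking partition regularity (Lemma 14.14.6 of Hindman--Strauss) to pass to a single J-set cell, where the only obstruction to containing a standard ball is a positively non-standard inner radius, which is exactly what your pair excludes. Your approach buys a more elementary witness family at the price of a finer cell analysis; the paper's buys a one-line conclusion at the price of a cleverer family. Two small remarks: in your final case analysis you must also arrange $m\leq\alpha_2$ (when $\alpha_2$ is standard this is supplied by non-emptiness of the cell, since any achievable valuation squeezed between two standard bounds is itself standard), and the detour through definable compactness and fsg is unnecessary --- a set containing a coset of the finite-index subgroup $p^l\cO$ is generic outright, and generic implies weakly generic directly from Newelski's definition.
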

\begin{proof}
Let $X$ be a definable J-set of $G$. Let $k\in\N$ and $f_1, f_2\in G^\N$ such that $v(f_i(j))=i+j+k$ every $i=1,2$ and $j\in\N$. Then there are $m\in\N$, $a\in G$ and $t\in\jj_{m}$ such that for $i=1,2$,
$a+\sum_{j=1}^{m} f_i(t(j))\in X$. Note that 
$v((a+\sum_{j=1}^{m} f_1(t(j))-(a+\sum_{j=1}^{m} f_1(t(j)))=f_1(t(1))=1+k+t(1)$. As $k$ is arbitrary, we have $b_k,c_k\in X$ such that $v(b_k-c_k)\in \N$ and $v(b_k-c_k)\geq k$. 
By cell decomposition, we know that $X$ contains a translate of $p^l\cO$ for some $l\in\N$, which is known and easy to check to be weakly generic. 
\end{proof}

\section{An example without J-property}\label{section-example}

We follow \cite{example-neq}.
For $a\in\N$ with $a\geq 1$, it has a unique binary expansion $a=\sum_i \epsilon(i) 2^i$. We call $\{i\in\N: \epsilon(i)=1\}$ the \textbf{support} of $a$ and denote it by $\supp(a)$. Let $B_k=\{2^k,2^k+1,\dots, 2^{k+1}-1\}$ for $k\in \N$ and 
$$A=\{n\in \N: B_k\backslash \supp(n)\neq \emptyset \text{ for every } k\}$$
In \cite{example-neq}, $A\subset \N$ is shown to be a J-set (in the semigroup $\N$) but not piecewise syndetic. 
We do not give the original definition of piecewise syndetic here, but by Theorem 3.2 \cite{example-neq}, a subset $X$ of a (discrete) semigroup $S$ is \textbf{piecewise syndetic} iff there is $p$ in  the minimal two-sided ideal of $\beta S$ with $X\in p$. 

As we study groups, we transfer the example into a group. Obviously, $\beta\Z=\beta\N\cup (-\beta \N)$ where $-\beta \N:=\{-p:p\in \beta\N\}$ and $-p:=\{-X:X\in p\}$. We denote the minimal two-sided ideal of $\beta\N$ by $K$. Then clearly, $\hat K=K\cup(-K)$ is the minimal two-sided ideal of $\beta\Z$. Hence, $A$ is not piecewise syndetic in $(\Z,+)$. Note that $\beta\Z$ can be regard as the type space over $\Z$ in the theory that every subset of $\Z$ is definable, and in this theory, the notions of piecewise syndetic sets and weakly generic sets are the same. 

We let $\A$ be a unary predicate,
$T$ the theory of $M:=(\Z,+, A)$ with $A=\A(M)$, $\M=(\M,+,\A)$ a monster model and $G:=(\M,+)$. By restricting $p\in\beta\Z$ to externally definable sets in $M$, we have a surjective semigroup morphism $\pi:\beta\Z\rightarrow S^\ext_G(M)$. It is also a map of $\Z$-actions and by Lemma 1.4 \cite{Newelski}, $A$ is not weakly generic in $G(M)$. Then we have

\begin{prop}
$A$ is a J-set of $G$ in $M$ which is not weakly generic in $M$.
\end{prop}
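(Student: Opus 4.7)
The plan is to handle the two assertions of the proposition independently. The non-weak-genericity of $A$ in $M$ is essentially assembled in the paragraph immediately preceding the statement: the surjective semigroup morphism $\pi:\beta\Z\to S^\ext_G(M)$ is a map of $\Z$-actions, so it carries the minimal two-sided ideal $\hat K$ of $\beta\Z$ onto $\AP_G(M)$, and $\WGen_G(M)$ is the closure of the latter. Since $A$ is not piecewise syndetic in $(\Z,+)$, i.e.\ $A$ lies in no $p\in\hat K$, the quoted Lemma 1.4 of \cite{Newelski} rules out weak genericity of $A$ in $G(M)$. I would simply record this invocation and move on.

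The remaining task is to show $A$ is a J-set of $G$ in $M$, which I would reduce to the fact from \cite{example-neq} that $A$ is a J-set of the semigroup $(\N,+)$. Given a finite family $F=\{f_1,\dots,f_n\}\in\Pf(\Z^\N)=\Pf(G(M)^\N)$, the plan is to shift each $f_i$ pointwise by a sufficiently large positive integer to land in $\N^\N$, apply the semigroup J-set property of $A$ in $\N$, and then absorb the shifts back into the ``$a$''-coordinates. Concretely, I would pick $N_j\in\N$ with $N_j>\max_{i\leq n}|f_i(j)|$ for every $j\in\N$ and define $g_i(j):=f_i(j)+N_j\in\N$. Applying the J-set property in $(\N,+)$ to the finite family $\{g_1,\dots,g_n\}\subset\N^\N$ yields $m\in\N$, $t\in\jj_m$, and $b\in\N^{m+1}$ with $\chi(m,b,t,g_i)\in A$ for every $i$. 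Finally, setting $a(k):=b(k)+N_{t(k)}$ for $k\leq m$ and $a(m+1):=b(m+1)$, a direct computation shows $\chi(m,a,t,f_i)=\chi(m,b,t,g_i)\in A$, since the contributions $N_{t(j)}$ absorbed into the $a(k)$'s exactly cancel the shifts applied to the values of $f_i$.

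I do not foresee a genuine obstacle; the argument is a bookkeeping reduction rather than a new idea. The only point worth checking is that the translation trick is consistent with the fact that the witnessing sequence $t$ is only chosen after the shifts: this is fine because the $N_j$'s are chosen before invoking the J-set property, and only the specific values $N_{t(j)}$ (determined once $t$ has been produced) are needed for the redefinition of $a$. The non-weak-genericity half being essentially a citation, the real content of the proposition is this one-paragraph combinatorial lift from $(\N,+)$ to $(\Z,+)=G(M)$.
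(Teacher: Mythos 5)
Your proof is correct and follows essentially the same route as the paper: the non-weak-genericity half is exactly the paper's citation of the morphism $\pi:\beta\Z\to S^\ext_G(M)$ together with Lemma 1.4 of \cite{Newelski} (using that $A$ is not piecewise syndetic in $(\Z,+)$), and the J-set half rests, as in the paper, on the Hindman--Maleki fact that $A$ is a J-set of the semigroup $(\N,+)$. Your explicit shift-and-absorb computation lifting the J-set property from $(\N,+)$ to $(\Z,+)$ is a correct piece of bookkeeping that the paper leaves implicit (the paper only spells out an analogous argument later, for $\M$, via Lemmas \ref{Hind-1} and \ref{Hind-2}), so it is a welcome but not divergent addition.
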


For things on $\M$, we should study the proof of Lemma 5.2 \cite{example-neq} carefully. Note that from Remark 4.46 \cite{Hindman}, a subset $X$ of a semigroup $S$ is piecewise syndetic iff $\bigcup_{t\in H}\{s\in S: tx\in X\}$ is thick for some finite subset $H$ of $S$. Here, a subset $X$ of a semigroup $S$ is called \textbf{thick} if for every finite subset $H$ of $S$, there is $s\in S$ such that $Hs\subset X$ (Definition 4.45 \cite{Hindman}). Hence, to show that $A$ is not piecewise syndetic in $(\N,+)$, it is equivalent to showing that for every $c_1,\dots,c_m\in\N$, there are $b_1,\dots, b_n$ such that for every $x\in \N$, there is $y\in x+\{b_1,\dots, b_n\}$ with $(y+\{c_1,\dots,c_m\})\bigcap A=\emptyset$. 
We prove a better form that $n$ depends only on $m$.

\begin{lemma}
For every $m\in\N$. Let $n=2^{2^{m+1}}$. Then for every $c_1,\dots, c_m\in \N$ and $x\in \N$, there is $y\in x+\{1,\dots, n\}$ such that $(y+\{c_1,\dots,c_n\})\bigcap A=\emptyset$.
\end{lemma}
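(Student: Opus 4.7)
Set $n_i := 2^{2^{i+1}}$ for $i \geq 0$, so that $n = n_m$ and $n_0 \mid n_1 \mid \cdots \mid n_m$. The plan is to find $\delta \in \{1,\ldots, n\}$ such that for every $i \in \{1,\ldots,m\}$ we have $B_i \subseteq \supp(x+c_i+\delta)$, which is stronger than the required $x+c_i+\delta \notin A$. A direct look at binary expansions shows that this condition is equivalent to $(x+c_i+\delta) \bmod n_i \in \{n_i - n_{i-1}, \ldots, n_i - 1\}$, because the positions $2^i, \ldots, 2^{i+1}-1$ constituting $B_i$ are precisely the top $2^i$ bits of a $2^{i+1}$-bit value and $2^{2^i} = n_{i-1}$. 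Equivalently, $\delta \bmod n_i \in R_i$ for a cyclic interval $R_i \subseteq \Z/n_i\Z$ of length $n_{i-1}$.

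Let $S_i \subseteq \Z/n_m\Z$ be the set of residues of $\delta$ satisfying the conditions indexed by $m, m-1, \ldots, i$. I claim by downward induction on $i$ from $m$ to $1$ that $|S_i| = n_{i-1}$ and that the reduction map $S_i \to \Z/n_{i-1}\Z$ is a bijection. The base case $i = m$ reduces to the elementary fact that a cyclic interval of $n_{m-1}$ consecutive residues in $\Z/n_m\Z$ projects bijectively onto $\Z/n_{m-1}\Z$. For the induction step, assuming the claim at $i$, the induction hypothesis supplies a bijection $\pi \colon S_i \to \Z/n_{i-1}\Z$, so $S_{i-1} = \pi^{-1}(R_{i-1})$ has size $|R_{i-1}| = n_{i-2}$, and the composition $S_{i-1} \xrightarrow{\pi} R_{i-1} \to \Z/n_{i-2}\Z$ is again a bijection because $R_{i-1}$ is $n_{i-2}$ consecutive residues in $\Z/n_{i-1}\Z$ and such an interval projects bijectively onto $\Z/n_{i-2}\Z$ under further reduction.

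Applying the claim at $i = 1$ gives $|S_1| = n_0 = 4 > 0$. Since $\delta$ ranges over $\{1, \ldots, n\}$ hitting each residue mod $n$ exactly once, any residue in $S_1$ lifts to a valid $\delta$, completing the proof.

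The central subtlety is the choice to assign block $B_i$ (rather than any smaller $B_k$) to the $i$-th constraint: this is exactly what makes $R_i$ a cyclic interval of length $n_{i-1}$ in $\Z/n_i\Z$, matching the fiber size of $\Z/n_i\Z \to \Z/n_{i-1}\Z$. This matching is what keeps every projection bijective throughout the induction and rules out the wrap-around obstruction that would arise under more naïve block assignments (for instance, using the same block for all $i$, or using block $B_m$ first and then falling off the end of the available range).
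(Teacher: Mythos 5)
Your proof is correct and follows essentially the same route as the paper's: both arguments reduce the problem to finding a single residue class modulo $n=2^{2^{m+1}}$ such that for each $i\leq m$ the block $B_i$ is contained in $\supp(y+c_i)$ (which forces $y+c_i\notin A$), and then use that the window $x+\{1,\dots,n\}$ meets every residue class mod $n$. The only difference is how the good residue is produced --- the paper constructs it explicitly by greedily filling in the missing bits of the (corrected) $c_k$ inside block $B_k$, whereas you certify its existence by the nested cyclic-interval counting argument along $\Z/n_m\Z\to\Z/n_{m-1}\Z\to\cdots$ --- and both hinge on the same fact that adjustments supported above a block cannot disturb it.
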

\begin{proof}
Let $c_1,\dots, c_m\in \N$ and $x\in\N$ be arbitrary. 

Inductively, we give $c'_k$ and $d_k$ for $1\leq k\leq m$.
Let $c'_1=c_1$ and
assume that $c'_1=\sum_j \epsilon_{1,j} 2^j$ where $\epsilon_{1,j}\in\{0,1\}$. Let $d_1=\sum_{j\in B_1}(1-\epsilon_{1,j}) 2^j$. Given $c'_k$ and $d_k$, we let $c'_{k+1}=c_k+d_k$. Assume that $c'_k=\sum_j \epsilon_{k,j} 2^j$ where $\epsilon_{k,j}\in\{0,1\}$. Let $d_{k+1}=(\sum_{1\leq j\leq k} d_k)+(\sum_{j\in B_{k+1}}(1-\epsilon_{k,j})2^j)$. 

Let $d=d_m$.
It is easy to see that $\supp(d+c_k)\supset B_k$ for every $1\leq k\leq m$. Clearly, there always exists $y\in x+\{1,\dots,n\}$ such that $y\equiv d$ (mod $n=2^{2^{m+1}}$). Then $y+ c_k\equiv d+c_k$ (mod $n$) and $\supp(y+c_k)\bigcap\{1,\dots,2^{m+1}-1\}=\supp(d+c_k)\bigcap\{1,\dots,2^{m+1}-1\}\supset B_k$ for each $k$. Then $(y+\{c_1,\dots,c_n\})\bigcap A=\emptyset$. 
\end{proof}

The advantage of the above from is that for given $m$ and $n=2^{2^{m+1}}$, the statement is first order and can be transferred. Then we have

\begin{prop}
$\A$ is not weakly generic for $G$ in $\M$.
\end{prop}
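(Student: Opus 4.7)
The plan is to apply the lemma via a first-order transfer from $M$ to $\M$. For each fixed $m \in \N$ and $n = 2^{2^{m+1}}$, the conclusion of the lemma amounts to the single first-order sentence
\[
\varphi_m := \forall c_1 \dots c_m\, \forall x\, \bigvee_{i=1}^{n} \bigwedge_{j=1}^{m} \neg \A(x+i+c_j)
\]
in the language of $T$; the existential over $i \in \{1,\dots,n\}$ is a finite disjunction since $n$ is a concrete integer depending only on $m$. I would check that $M = (\Z,+,A)$ satisfies $\varphi_m$ for every $m$ — for parameters in $\N$ this is precisely the lemma, and the remaining cases are disposed of by the observation that $A \subset \N$ — and then invoke $M \prec \M$ to conclude $\M \models \varphi_m$.

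Next, I would assume for contradiction that $\A$ is weakly generic for $G$ in $\M$. Unpacking the definition, together with Remark 1.2 of \cite{Newelski} to arrange the companion set to be definable, this yields a non-generic definable subset $Y \subset G$ and parameters $g_1, \dots, g_m \in G$ such that $G = \bigcup_{j=1}^{m}(g_j + (\A \cup Y))$.

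Then I would invoke $\varphi_m$ in $\M$ with $c_j := -g_j$ and the corresponding $n = 2^{2^{m+1}}$: for every $x \in G$, some $i \in \{1,\dots,n\}$ satisfies $x + i \notin g_j + \A$ for all $j$. Since $x + i$ lies in $G = \bigcup_j(g_j + \A) \cup \bigcup_j(g_j + Y)$, it must lie in $g_j + Y$ for some $j$, and therefore $x \in (g_j - i) + Y$. This gives $G = \bigcup_{i=1}^{n} \bigcup_{j=1}^{m} \bigl((g_j - i) + Y\bigr)$, contradicting the non-genericity of $Y$.

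The principal obstacle — precisely what motivated extracting the uniform bound $n = 2^{2^{m+1}}$ depending only on $m$ in the lemma — is securing the first-order form. Once that is in place, elementarity does the heavy lifting and the remainder is entirely formal.
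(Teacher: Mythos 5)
Your overall strategy---transfer the uniform lemma to $\M$ by elementarity and then refute weak genericity by a direct covering argument---is sound, and your endgame is in fact more elementary than the paper's: where the paper passes through piecewise syndeticity, the surjection onto $S^\ext_G$ and Lemma 1.4 of \cite{Newelski}, you go straight from the definition: if $G=\bigcup_{j\le m}(g_j+(\A\cup Y))$ with $Y$ definable and non-generic, then $\varphi_m$ with $c_j=-g_j$ yields $G=\bigcup_{i\le n}\bigcup_{j\le m}((g_j-i)+Y)$, contradicting non-genericity of $Y$. That part is correct (a small formal point: the numerals $1,\dots,n$ are parameters from $M$, not closed terms of the language, but transfer with parameters from $M$ along $M\prec\M$ is unproblematic).

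The genuine gap is the step ``$M\models\varphi_m$''. The lemma is proved only for $c_1,\dots,c_m,x\in\N$, whereas your $\varphi_m$ quantifies over all of $\Z$, and disposing of the remaining cases ``by the observation that $A\subset\N$'' is not a valid argument: it only handles the case where every value $x+i+c_j$ is non-positive. Mixed cases such as $x$ very negative with some $c_j$ large and positive (or conversely) are neither instances of the lemma nor trivialized by $A\subseteq\N$; this is exactly the passage from $(\N,+)$ to $(\Z,+)$ that the paper treats separately (adding $<$, transferring inside the positive part, and then arguing as in the $\beta\Z=\beta\N\cup(-\beta\N)$ discussion). The claim itself is true and the repair is short: for $k\le m$ the positions in $B_k$ are $<2^{m+1}$, so for a positive integer $z$ the condition $B_k\subseteq\supp(z)$ depends only on $z$ modulo $n=2^{2^{m+1}}$. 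Given arbitrary $c_1,\dots,c_m,x\in\Z$, replace each $c_j$ by its residue $\tilde c_j\in\{0,\dots,n-1\}$, run the lemma's construction on the $\tilde c_j$ to produce $d$, and choose $i\in\{1,\dots,n\}$ with $x+i\equiv d\pmod{n}$; then each $x+i+c_j$ is either non-positive, hence not in $A$, or positive and congruent to $d+\tilde c_j$ modulo $n$, hence has $B_j\subseteq\supp(x+i+c_j)$ and again lies outside $A$. With this lemma-strengthening supplied (or by instead relativizing to the positive part after adding $<$, as the paper does), your proof goes through.
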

\begin{proof}
To make use of the structure $(\N,+)$, we add $<$ into the language of $T$. Then we may assume that $(\M^{>0},+,\A)$ is a monster model of $(\N,+,\A)$. Then by transferring the above lemma for each $m$ to $(\M^{>0},+)$, we have that $\A$ is not piecewise syndetic in $(\M^{>0},+)$. Like the above argument from $(\N,+)$ to $(\Z,+)$, we have that $\A$ is not piecewise syndetic in $(\M,+)$. Also by the same argument above, we have that $\A$ is not weakly generic for $G$ in $\M$.
\end{proof}

Now we show that $\A$ is a J-set of $G$ in $\M$.
Like Lemma 5.1 \cite{Hindman}, we have 
\begin{lemma}\label{Hind-1}
Let $f_1,\dots,f_n\in \M^\N$. Then there is $m\in\N$ and $t\in\jj_m$ such that $\sum_{j\in t}f_i(j)\in 2^{n+1}\M$ for each $i$.
\end{lemma}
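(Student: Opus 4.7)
My plan is to reduce to a pigeonhole argument on residues modulo $2^{n+1}$. The observation driving the choice of $m$ is that if every $f_i(j)$ (for $j$ in some infinite subset $J \subseteq \N$) shares the same residue $r_i$ modulo $2^{n+1}\M$, then the sum of any $m = 2^{n+1}$ of them satisfies $\sum_{k} f_i(j_k) \equiv 2^{n+1} r_i \equiv 0 \pmod{2^{n+1}\M}$. So $m$ is essentially forced to be (a multiple of) $2^{n+1}$, and the real combinatorial work is in producing the stabilizing set $J$.

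To set this up I first need that $\M/2^{n+1}\M$ remains a finite group of size $2^{n+1}$ even though $\M$ is a monster model. I would argue by first-order transfer from $(\Z,+,\A)$: the statement that any $2^{n+1}+1$ elements contain two congruent modulo $2^{n+1}$, together with the statement that there exist $2^{n+1}$ pairwise incongruent elements, is a sentence in the language of $T$, true in $(\Z,+,\A)$, and hence holds in $\M$. The predicate $\A$ plays no role here since these sentences only mention the group structure.

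With that in hand, I would consider for each $j \in \N$ the tuple
$$\bar r(j) \;=\; \bigl(f_1(j),\ldots,f_n(j)\bigr) \bmod 2^{n+1}\M \;\in\; (\M/2^{n+1}\M)^n,$$
which is a map from the infinite set $\N$ into a finite set of size $(2^{n+1})^n$. Ordinary (external) pigeonhole then produces an infinite $J \subseteq \N$ on which $\bar r$ is constant, say equal to $(r_1,\ldots,r_n)$. Taking $m = 2^{n+1}$ and letting $t \in \jj_m$ enumerate any $m$ elements of $J$ in increasing order finishes the argument: for each $i \leq n$,
$$\sum_{j \in t} f_i(j) \;\equiv\; m\,r_i \;=\; 2^{n+1} r_i \;\equiv\; 0 \pmod{2^{n+1}\M}.$$

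There is no serious obstacle. The only point worth flagging is the first-order transfer keeping $\M/2^{n+1}\M$ finite of size $2^{n+1}$ in the monster, since without it the pigeonhole target is no longer finite and the argument collapses; everything else is routine.
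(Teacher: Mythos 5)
Your proof is correct and follows essentially the same route as the paper: both use that $\M/2^{n+1}\M=\Z/2^{n+1}\Z$ is finite (which you justify by transfer), find an infinite index set on which every $f_i$ is constant modulo $2^{n+1}\M$ by pigeonhole (the paper refines inductively over $i$, you pigeonhole on $n$-tuples of residues in one step), and then take $m=2^{n+1}$ increasing indices so that each sum is $m\,r_i\equiv 0\pmod{2^{n+1}\M}$. No gaps.
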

\begin{proof}
Note that $\Z/2^{n+1}\Z$ is finite, and we have $\M/2^{n+1}\M=\Z/2^{n+1}\Z$. Let $I_0=\N$. Inductively, we can find $I_{i}\subset I_{i-1}$ for $i=1,\dots,n$ such that for every $j_1,j_2\in I_i$, $f_i(j_1)-f_i(j_2)\in 2^{n+1}\M$. Let $m=2^{n+1}$ and $t\in \jj_m$ with $t\subset I_n$. It is easy to check that these $m$ and $t$ are what we want. 
\end{proof}

Then we modify the proof of Lemma 5.2 \cite{example-neq} which is original on $(\N,+)$ to $(\Z,+)$.
\begin{lemma}\label{Hind-2}
Let $b_1,\dots,b_n\in 2^{n+1}\Z$. Then there is $a\in\Z$ such that $a+b_i\in A$ for every $i$.
\end{lemma}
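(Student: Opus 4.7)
The plan is to reduce to the original Lemma~5.2 of \cite{example-neq}, which asserts the same conclusion when $b_1,\dots,b_n\in 2^{n+1}\N$. The only genuine modification needed to pass from $(\N,+)$ to $(\Z,+)$ is to accommodate negative $b_i$'s, and this can be achieved by a uniform shift that preserves divisibility by $2^{n+1}$.

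Concretely, I would first pick $L\in 2^{n+1}\N$ large enough that $L+b_i\geq 0$ for every $i$; this is possible because $b_1,\dots,b_n$ are finitely many fixed integers. Setting $b'_i:=L+b_i$, we have $L,b_i\in 2^{n+1}\Z$ and $b'_i\geq 0$, hence $b'_i\in 2^{n+1}\N$. Applying the $(\N,+)$-version of the lemma then yields some $a'\in\N$ with $a'+b'_i\in A$ for every $i$, and setting $a:=a'+L\in\N\subset\Z$ gives $a+b_i=a'+b'_i\in A$ for every $i$, as required.

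The substantive content lies in the cited $(\N,+)$ argument, which exploits the fact that each $b_i$ has no $1$-bits at positions $\leq n$: this makes the low-order bits of $a+b_i$ identical to those of $a$, so one can trivially guarantee a zero bit in every block $B_k\subseteq\{1,\dots,n\}$ just by choosing the corresponding bits of $a$. The higher blocks $B_k$ with $2^k>n$ are handled by a more delicate combinatorial choice of the higher bits of $a$, using that only $n$ constraints need to be met and that $b_i\in 2^{n+1}\Z$ prevents carries from the low part into $B_k$. I do not anticipate any new obstacle beyond the positivity shift above, since $L\in 2^{n+1}\N$ leaves the zero low-bit structure of the $b'_i$ unchanged, so the original block-by-block construction goes through verbatim for the $b'_i$; the main thing to verify when writing out the modified proof is just this preservation of hypotheses under the shift by $L$.
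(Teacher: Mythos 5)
Your proposal is correct and follows essentially the same route as the paper: the paper also shifts by a large $b\in 2^{n+1}\N$ with $b>|b_i|$ to make the $b_i$ nonnegative multiples of $2^{n+1}$, and then runs the Hindman--Maleki block argument (which you invoke as a black box via Lemma~5.2 of \cite{example-neq} rather than reproducing). The only cosmetic point is to take $L$ with $L+b_i>0$ strictly (as the paper does with $b>|b_i|$) so the shifted $b'_i$ satisfy the hypotheses of the cited $(\N,+)$ lemma without worrying about the value $0$.
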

\begin{proof}
Let $b\in 2^{n+1}\N$ such that $b>|b_i|$ for each $i$. Let $c_{0,i}=b+b_i$ for each $i$. Note that $2^{n+1}\leq c_{0,i}\in 2^{n+1}\N$ for each $i$. The next paragraph is from the proof of Lemma 5.2 \cite{example-neq}.

Let $l\in\N$ be the least such that $2^l>n$. As $2^{l-1}<n+1$, we have $2^{l-1}\in B_{l-1}\backslash \supp(c_{0,i})$. As $|B_l|=2^l>n$, there is $r_0\in B_l$ such that
$B_l\backslash \supp(2^{r_0}+c_{0,i})\neq\emptyset$ for each $i$. Let $c_{1,i}=2^{r_0}+c_{0,i}$. Inductively, given $c_{j,i}$, as $|B_{l+j}|=2^{l+j}>n$, there is $r_j\in B_{l+j}$ such that
$B_{l+j}\backslash \supp(2^{r_j}+c_{j,i})\neq\emptyset$ for each $i$. We can stop at $j=k$ when $2^{2^{k+i}}>\max_i c_{0,i}$. Let $c=\sum_{j=0}^{k}2^{r_j}$. It is easy to check that $c+c_{0,i}\in A$ for each $i$.

Let $a=b+c$. Then $a+b_i\in A$ for each $i$.
\end{proof}

It is clear that the statement of Lemma \ref{Hind-2} is first order on each $n$ and hence holds in $(\M,+,\A)$. Combining with Lemma \ref{Hind-1}, we have

\begin{prop}
$\A$ is a J-set of $G$ in $\M$.
\end{prop}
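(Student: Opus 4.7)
The plan is to combine Lemma \ref{Hind-1} and the first-order transfer of Lemma \ref{Hind-2}, using the fact that in the commutative (additive) setting $\chi(m,a,t,f)$ simplifies to $\sum_{j=1}^{m+1}a(j)+\sum_{j=1}^{m}f(t(j))$, so all the ``constants'' $a(j)$ can be absorbed into a single shift.

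First I would fix an arbitrary finite $F=\{f_1,\dots,f_n\}\subset G^{\N}$. Applying Lemma \ref{Hind-1} to $f_1,\dots,f_n$ yields some $m\in\N$ and $t\in\jj_m$ with $b_i:=\sum_{j=1}^{m}f_i(t(j))\in 2^{n+1}\M$ for each $i\leq n$. This reduces the problem to finding a single element $a\in\M$ such that $a+b_i\in\A$ for every $i$, because then setting $a(1):=a$ and $a(j):=0$ for $2\leq j\leq m+1$ gives $\chi(m,a,t,f_i)=a+b_i\in\A$ for all $i$.

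Next, I would invoke Lemma \ref{Hind-2} in the transferred form. For each fixed $n$, the statement ``for all $b_1,\dots,b_n$ with $b_i\in 2^{n+1}\Z$, there exists $a$ with $a+b_i\in A$ for every $i$'' is a single first-order sentence in the language of $T$ (the divisibility condition $b_i\in 2^{n+1}\Z$ is first-order in $(\Z,+)$, and $A$ is named by $\A$). Since this sentence holds in the standard model $M=(\Z,+,A)$, it holds in $\M$. Applying it to our $b_1,\dots,b_n\in 2^{n+1}\M$ supplies the required $a\in\M$.

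The only subtle point is the first-order transfer: one needs to verify that the condition ``$b_i\in 2^{n+1}\M$'' and the conclusion ``$a+b_i\in\A$'' are uniformly first-order for each fixed $n$, which is transparent from the definitions. No genuine obstacle remains; the conceptual content is entirely encoded in Lemmas \ref{Hind-1} and \ref{Hind-2}, and this proposition is essentially their combination pulled over to the monster via elementary equivalence.
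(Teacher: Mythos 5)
Your proposal is correct and follows the same route as the paper: apply Lemma \ref{Hind-1} to the finite family to get a sum divisible by $2^{n+1}$, then use the first-order transfer of Lemma \ref{Hind-2} to $(\M,+,\A)$ to find the shift $a$, absorbing the constants $a(j)$ into one since the group is commutative. The paper leaves exactly these details implicit, so your write-up is essentially an expanded version of its proof.
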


Hence, witnessed by $\A$, we have
\begin{prop}
The definable group $G$ in $T$ does not have J-property. 
\end{prop}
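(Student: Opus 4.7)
The plan is to invoke the two immediately preceding propositions together with the definition of J-property. They establish, respectively, that the predicate $\A$ is a J-set of $G$ in $\M$ and that $\A$ is not weakly generic for $G$ in $\M$. Since $\M$ is a monster model of $T$, it is in particular $\aleph_0$-saturated, so $\A$ provides a single definable set which is a J-set but not weakly generic in some (hence any) sufficiently saturated model.

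Combined with the corollary after Proposition 2.2, which says every weakly generic set is automatically a J-set, this non-coincidence is exactly the failure of the coincidence of J-sets and weakly generic sets in $\M$. By Proposition 2.5 this property is invariant on $\aleph_0$-saturated models of $T$, and so the failure witnessed by $\A$ in $\M$ is genuinely a failure at the level of the theory, which is exactly the negation of the J-property.

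There is no remaining obstacle at this stage: the substantive combinatorial content was absorbed into Lemmas \ref{Hind-1} and \ref{Hind-2} and the first-order transfer from $(\N,+)$ to $(\M^{>0},+,\A)$ immediately preceding this proposition. The final proposition is essentially a one-line assembly of the two preceding propositions with the definition of J-property, so the proof will consist of little more than citing them and remarking that $\M$ is $\aleph_0$-saturated.
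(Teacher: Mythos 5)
Your proposal is correct and matches the paper exactly: the paper gives no separate argument here, simply assembling the two preceding propositions ($\A$ is a J-set of $G$ in $\M$ but not weakly generic in $\M$) with the $\aleph_0$-saturation of $\M$ and the invariance result to conclude the failure of J-property. Your only implicit adjustment, reading the J-property as the coincidence of J-sets with weakly generic sets (i.e.\ every J-set is weakly generic, the converse inclusion being the earlier corollary), is the intended reading of the definition, so there is no gap.
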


However, this theory $T$ is not stable:

\begin{prop}
The upper Banach density $\delta$ of $A$ in $\N$ is $>0$. Hence, by Theorem D \cite{GC}, the theory $T$ is not stable.
\end{prop}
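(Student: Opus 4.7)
The plan is to exhibit a positive lower bound for $|A \cap [0, N_K)|/N_K$ along the subsequence $N_K := 2^{2^{K+1}}$, which automatically bounds the upper Banach density of $A$ from below. The key observation is that for $n \in [0, N_K)$, every bit of $n$ at a position $\geq 2^{K+1}$ is zero, so the condition $B_k \setminus \supp(n) \neq \emptyset$ holds automatically for $k > K$, and membership in $A$ reduces to the finite requirement that, for each $k \leq K$, the bits of $n$ at the positions of $B_k$ are not all equal to $1$.

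First I will use the disjoint decomposition $\{1, 2, \ldots, 2^{K+1}-1\} = \bigsqcup_{k=0}^{K} B_k$ (position $0$ lies in no block). Since the bit pattern in each $B_k$ can be chosen independently of the others and of the bit at position $0$, a direct count gives
\[
|A \cap [0, N_K)| = 2 \cdot \prod_{k=0}^{K}\bigl(2^{2^k} - 1\bigr) \qquad\text{and}\qquad N_K = 2 \cdot \prod_{k=0}^{K} 2^{2^k},
\]
so that
\[
\frac{|A \cap [0, N_K)|}{N_K} = \prod_{k=0}^{K}\bigl(1 - 2^{-2^k}\bigr).
\]

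Next I will note that $\sum_{k \geq 0} 2^{-2^k}$ converges (comparing to $\sum 2^{-k-1}$ via $2^k \geq k+1$), hence the infinite product $c := \prod_{k=0}^{\infty}\bigl(1 - 2^{-2^k}\bigr)$ is a strictly positive real number. Consequently the ratio displayed above is $\geq c$ for every $K$, and therefore the upper Banach density satisfies $\delta(A) \geq c > 0$. The final sentence of the proposition is then an immediate invocation of Theorem D of \cite{GC}.

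There is no substantive obstacle: the argument is a clean independence-of-blocks counting together with the elementary fact that the infinite product converges to a nonzero value. The only minor bookkeeping point is the free bit at position $0$, which lies in no $B_k$ and contributes a common factor $2$ that cancels between numerator and denominator.
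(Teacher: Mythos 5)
Your proposal is correct and follows essentially the same route as the paper: the same subsequence $2^{2^{K+1}}$, the same block-by-block count giving $\prod_{k=0}^{K}(1-2^{-2^k})$, and the same conclusion via positivity of the infinite product. The only (inessential) difference is that you invoke the standard criterion that $\sum_k 2^{-2^k}<\infty$ forces the product to be positive, whereas the paper verifies this directly by an explicit estimate on $\sum_k \log(1-2^{-2^k})$.
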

\begin{proof}
It is easy to see that for $d\geq 1$, 
$$\frac{|A\cap\{0,1,\dots, 2^{2^{d+1}}-1\}|}{2^{2^{d+1}}}=\frac{2\prod_{k=0}^d(2^{2^k}-1)}{2^{2^{d+1}}}=\prod_{k=0}^d(1-2^{-2^k})$$
Clearly, the limit $\prod_{k=0}^\infty(1-2^{-2^k})$ exists and $\delta\geq \prod_{k=0}^\infty(1-2^{-2^k})$.
To show $\prod_{k=0}^\infty(1-2^{-2^k})>0$, it suffices to show $\sum_{k=0}^\infty \log(1-2^{-2^k})$  converges. Note that $\log(1+x)=x-\frac{1}{2}x^2+\frac{1}{3}x^3-\frac{1}{4} x^4+\dots$ when $|x|<1$. Then $|\log(1+x)|\leq |x|+|x|^2+|x|^3+\dots=\frac{|x|}{1-|x|}$ for $|x|<1$. 
Then for each $k$,
$$|\log(1-2^{-2^k})|\leq \frac{2^{-2^k}}{1-2^{-2^k}}=\frac{1}{2^{2^k}-1}\leq 2^{-2^k-1}$$
Hence, $\sum_{k=0}^\infty \log(1-2^{-2^k})$ converges absolutely. 
\end{proof}

As we have shown that every superstable commutative group has J-property and it is well-known that $(\Z,+)$ is superstable, it is natural to ask:

\begin{question}
Is there a stable commutative group without J-property? Or more specifically, is there a stable expansion of the superstable group $(\Z,+)$ by a unary predicate for $A\subset \N$ such that the expansion does not have J-property witnessed by $\A:=A(\M)$. 
\end{question}

\begin{remark}
Theorem 7.16 \cite{graph} gives a method to find stable expansions. It says that $(\Z,+,A)$ is stable iff $(V,E)$ is stable where $V\subset\N$ is vaporous, $E$ is a graph relation on $V$ and $A=V\cup \{x+y:x Ey\}$.
 Note that when $E=\emptyset$ or $E=V\times V$, $(V,E)$ is superstable. Let $A_1=V$ and $A_2=V\cup (V+V)$. Let $\M$ be a monster model of $(\Z,+, V, A ,A_1,A_2)$. We can study $(\Z,+,A)$ by restricting the language. We use blackboard bold style to denote the global objects. 

\noindent $\bullet$ If $\V$ is a J-set, by applying Theorem \ref{superstable} to $(\Z,+,A_1)$, $\A_1=\V$ is also weakly generic. In this case, $\A$ is weakly generic.

\noindent $\bullet$ Assume that $\V$ is not a J-set. If $\A_2$ is not a J-set, then $\A$ is not a J-set and not weakly generic. So we assume that $\A_2$ is a J-set. By  \cite{Hindman} Lemma 14.14.6, $\V+\V$ is a J-set. But it is obvious from the definition of J-sets that $\V$ is a J-set, which is a construction. 

Hence, this construction by $(V,E)$ does not solve the question. 
\end{remark}

\end{document}